\theoremstyle{plain}
\newtheorem{thm}{Theorem}[section]
\newtheorem{prop}[thm]{Proposition}
\newtheorem{cor}[thm]{Corollary}
\newtheorem{lem}[thm]{Lemma}
\newtheorem{rmk}[thm]{Remark}
\newcommand{\bQ}{\overline{\mathbb{Q}}}
\newcommand{\bF}{\overline{\mathbb{F}}}
\newcommand{\C}{\mathbb{C}}
\newcommand{\Q}{\mathbb{Q}}
\newcommand{\Z}{\mathbb{Z}}
\newcommand{\F}{\mathbb{F}}
\newcommand{\lra}{\longrightarrow}
\newcommand{\diag}{{\rm diag}}
\newcommand{\ds}{\displaystyle}
\newcommand{\G}{\Gamma}
\newcommand{\bS}{\overline{S}}
\newcommand{\GL}{{\rm GL}}
\newcommand{\SL}{{\rm SL}}
\newcommand{\bs}{\backslash}
\newcommand{\PSL}{{\rm PSL}}
\title[The $L$-function of the surface parametrizing cuboids]
{The $L$-function of the surface parametrizing cuboids}
\author{Madoka Horie and Takuya Yamauchi}
\keywords{$L$-functions, Cuboids, Galois representations, modular forms of weight 2 and 3}
\thanks{}
\subjclass[2020]{14G10, 11G35}
\address{Madoka Horie \\
Faculty of Fundamental Science, 
National Institute of Technology, 
Niihama College., 102-8554, JAPAN}
\email{horiemaaa@gmail.com}
\address{Takuya Yamauchi \\ 
Mathematical Inst. Tohoku Univ.\\
 6-3,Aoba, Aramaki, Aoba-Ku, Sendai 980-8578, JAPAN}
\email{takuya.yamauchi.c3@tohoku.ac.jp}
\begin{document}

\maketitle

\begin{abstract}
In this note, we compute the $L$-function of the projective smooth surface $S$ over $\Q$
that parametrizes cuboids whose geometric properties are studied in detail by Stoll and Testa \cite{ST}.
As a byproduct, we completely determine the structure of
${\rm Pic}(S_{\overline{\Q}})$ as a ${\rm Gal}(\overline{\Q}/\Q)$-module.
\end{abstract}

\section{Introduction}\label{intro} 
The congruent number problem has fascinated number theorists for centuries. 
It asks which positive integers $n$ arise as the area of a right triangle with rational side lengths. 
This classical problem can be reduced to the existence of rational solutions of certain Diophantine equations and, ultimately, to the question of whether the Mordell--Weil rank of the elliptic curve $E_n:y^2=x^3-n^2 x$ is positive. The study of this problem involves modern mathematics, including the theory of elliptic curves and modular forms \cite{Kob}. 

A higher-dimensional analogue is the rational box problem, which asks for the existence
of rational solutions to the relations among the three side lengths $a_1,a_2,a_3$,
the three face diagonals $b_1,b_2,b_3$, and the space diagonal $c$ of a
three-dimensional rectangular box (see \cite[Section~0]{NS} for the history and also 
\cite{Kani}, \cite{PW},\cite{B},\cite{FM}).
Namely, these quantities satisfy
\begin{equation}\label{Box} 
\left\{
\begin{array}{l}
a^2_1+a^2_2=b_3^2\\
a^2_1+a^2_3=b_2^2\\
a^2_2+a^2_3=b_1^2\\
a^2_1+a^2_2+a^2_3=c^2.
\end{array}\right.
\end{equation}
A solution of (\ref{Box}) is called a cuboid, and it is called a rational cuboid if
$a_1,a_2,a_3,b_1,b_2,b_3,c$ are all rational. As in the case of congruence number problem, 
we can attach the projective algebraic surfaces parametrizing cuboids that is defined by 
\begin{equation}\label{bSeq1}
\bar{S}: 
\left\{
\begin{array}{l}
a^2_1+b^2_1=c^2\\
a^2_2+b^2_2=c^2\\
a^2_3+b^2_3=c^2\\
a^2_1+a^2_2+a^2_3=c^2
\end{array}\right.
\end{equation}
inside $\mathbb{P}^6$ with projective coordinates 
$[a_1:a_2:a_3:b_1:b_2:b_3:c]$. Note that (\ref{bSeq1}) is equivalent to (\ref{Box}) without 
any change of variables.  

In \cite{ST}, Stoll and Testa gave a detailed study of $\bar{S}$ and
computed many geometric invariants explicitly.
For example, they computed the Hodge diamonds of the minimal desingularization
$S$ of $\bar{S}$, as well as the Picard group of $S$ with explicit generators.
As a consequence, $S$ turns out to be of general type.
Therefore, if the Bombieri-Lang conjecture (\cite{Lang86},\cite[p.68, Conjecture 4.6]{DT}) is true,
there are only ``a few'' rational solutions on $S$, and hence on $\bar{S}$.
More precisely, $S(\Q)$ is not Zariski dense in $S_{\bar{\Q}}$. 
Although there are no known rational solutions of (\ref{bSeq1}) and only ``a few''
rational points are expected to exist by the Bombieri-Lang conjecture,
the geometry of the surface $\bar{S}$ remains of independent interest.

In this paper, we compute the $L$-functions of $S$ and $\bar{S}$.
As a byproduct, we give a complete description of the Picard group
${\rm Pic}_{\bar{\Q}}(S)$ as a $G_\Q := {\rm Gal}(\bQ/\Q)$-module.
We note that Stoll and Testa \cite{ST} determined explicit generators of
${\rm Pic}_{\bar{\Q}}(S)$ together with precise information on their fields of definition.
In principle, their results contain enough information to recover the Galois module structure. 
In fact, in the proof of \cite[Proposition 7]{ST}, by computer, they computed the intersection matrix for the set $\mathcal{G}$ (see \cite[p.7, Definition 6]{ST}) of possible generators and 
it may easy to extract precise generators from $\mathcal{G}$ and the shape of the intersection matrix. However, such a description is not given explicitly there.
Our approach provides a direct and explicit determination of the
$G_\Q$-module structure as a byproduct of the computation of the $L$-functions.

To state the claim, we prepare some notation. For a prime $\ell$, a non-negative integer $i$, and an algebraic 
variety $X$ over $\Q$, we denote by $H^i_{\text{\'et}}(X_{\bQ},\Q_\ell))$ the $i$-th $\ell$-adic 
\'etale cogomology of $V$ (cf. \cite{Deligne}).  
For an $\ell$-adic Galois representation $V$ of $G_\Q$, 
we denote by $L(s,V)$ the $L$-function of $V$ defined in \cite{Serre} where 
$s$ is the complex parameter. 
For a modular form $h$ (resp. the Dirichlet character $\chi_L$ associated to a quadratic extension 
$L/\Q$), we denote by 
$L(s,h)$ (resp. $L(s,\chi_L)$) the $L$-function of $h$ (resp. $\chi_L$).  
The Riemann zeta function is denoted by $\zeta(s)$. We refer to \cite[Section 4.4 and Section 5.9]{DS} for the $L$-functions.  For $N\in\{8,16,32\}$, let $h_N\in S_3(\G_1(N))$ be the unique newform of level $N$ with 
rational Fourier coefficients  
 (see Section \ref{etacoh}). 

We are now ready to claim the following result. 
\begin{thm}\label{main}{\rm(}Corollary \ref{PN}, \ref{LSS}{\rm)} Let $\ell$ be any prime. Then, it holds that 
\begin{enumerate}
\item $L(s,H^2_{\text{\'et}}(S_{\bQ},\Q_\ell))=
L(s,H^2_{\text{\'et}}(\bar{S}_{\bQ},\Q_\ell))\zeta(s-1)^{24}L(s-1,\chi_{\Q(\sqrt{-1})})^{24}$;
\item $L(s,H^2_{\text{\'et}}(\bar{S}_{\bQ},\Q_\ell))=L(s,h_{16})^3 L(s,h_{32})L(s,h_8)^3 L(s-1,L_\ell)$
where \\ 
$L(s,L_\ell)=\zeta(s)^{10}
L(s,\chi_{\Q(\sqrt{-1})})^{2}L(s,\chi_{\Q(\sqrt{-2})})L(s,\chi_{\Q(\sqrt{2})})^{3}$ 
{\rm(}see Theorem \ref{maineta} for $L_\ell${\rm )};
\item ${\rm Pic}(S_{\bQ})$ is free of rank $64$ and it is generated by 
\begin{enumerate}
\item 34 irreducible divisors defined over $\Q$;
\item 26 irreducible divisors strictly defined over $\Q(\sqrt{-1})$;
\item one irreducible divisor strictly defined over $\Q(\sqrt{-2})$;
\item 3 irreducible divisors strictly defined over $\Q(\sqrt{2})$.
\end{enumerate}
\end{enumerate}
\end{thm}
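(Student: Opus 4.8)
The plan is to decompose the Galois representation $V:=H^2_{\text{\'et}}(S_{\bQ},\Ql)$ into an \emph{algebraic} (Tate-type) part and a \emph{transcendental} part, thereby reducing the three assertions to the determination of each piece. The cycle class map embeds ${\rm NS}(S_{\bQ})\otimes\Ql(-1)$ into $V$, and I expect its image to be exactly the subrepresentation on which $G_{\Q}$ acts through a finite quotient after a single Tate twist, with everything else accounted for by irreducible $2$-dimensional pieces of motivic weight $2$ arising from weight-$3$ newforms. Granting this, statement (2) amounts to computing the transcendental part together with the residual algebraic part $L_\ell$ of $\bar S$; statement (1) is the comparison of $S$ with $\bar S$ through the exceptional locus of the resolution; and statement (3) is the readout of ${\rm NS}(S_{\bQ})$ from the Tate-type part.

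First I would settle (1) by studying the minimal desingularization $\pi\colon S\to\bar S$. Using the explicit singular locus of $\bar S$ determined by Stoll and Testa \cite{ST}, I would resolve each singular point and record the exceptional curves together with their fields of definition and the way complex conjugation permutes them. For a resolution whose exceptional locus is a union of rational curves one has, at least after semisimplification, a $G_{\Q}$-equivariant identification $V\cong H^2_{\text{\'et}}(\bar S_{\bQ},\Ql)\oplus E$, where $E$ is spanned by the classes of the exceptional $\mathbb{P}^1$'s, each contributing a copy of $\Ql(-1)$ permuted according to the geometry. The target factor $\zeta(s-1)^{24}L(s-1,\chi_{\Q(\sqrt{-1})})^{24}$ says precisely that these rational curves form $24$ pairs interchanged by the nontrivial element of ${\rm Gal}(\Q(\sqrt{-1})/\Q)$, so that $E\cong\bigl(\mathrm{Ind}_{G_{\Q(\sqrt{-1})}}^{G_{\Q}}\Ql\bigr)^{\oplus 24}(-1)$; verifying this pairing is a finite, explicit check on the resolution.

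The heart of the argument is (2), the identification of the transcendental part. Here I would compute the zeta function of $\bar S$ over $\F_p$ for many small primes $p$ by point counting, most conveniently through a fibration of $\bar S$ whose fibral $H^1$ is governed by weight-$2$ forms and whose Leray assembly produces the weight-$3$ pieces of $H^2$, and exploiting the automorphism group permuting the three axes to organize the count. Subtracting the known contribution of the algebraic classes leaves the Frobenius traces on the transcendental part, which should match the Fourier coefficients of $h_8,h_{16},h_{32}$ with multiplicities $3,1,3$; the multiplicities themselves I expect to fall out of the isotypic decomposition of $H^{2}_{\mathrm{tr}}$ under the automorphism group. To upgrade a finite numerical match to an isomorphism of Galois representations I would invoke a Faltings--Serre / Livn\'e rigidity argument, feasible because the relevant representations are unramified outside $2$ (the levels being powers of $2$) and have prescribed determinant. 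The three newforms are moreover irreducible of motivic weight $2$ (indeed of CM type, by $\Q(\sqrt{-1})$ or $\Q(\sqrt{-2})$), hence not of Tate type, which is exactly what guarantees that they contribute nothing to ${\rm NS}$. The residual algebraic part $L_\ell$ of $\bar S$ is then the complementary Artin representation with $L$-function $\zeta(s)^{10}L(s,\chi_{\Q(\sqrt{-1})})^{2}L(s,\chi_{\Q(\sqrt{-2})})L(s,\chi_{\Q(\sqrt{2})})^{3}$, determined by the fields of definition of the algebraic classes already present on $\bar S$.

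Finally, (3) follows by combining the pieces. Adding the exceptional contribution of (1) to the algebraic part of (2) gives a Tate-type subrepresentation of $V$ whose underlying Artin representation is $\mathbf{1}^{\oplus 34}\oplus\chi_{\Q(\sqrt{-1})}^{\oplus 26}\oplus\chi_{\Q(\sqrt{-2})}\oplus\chi_{\Q(\sqrt{2})}^{\oplus 3}$, of dimension $34+26+1+3=64$; since the transcendental part carries no Tate classes, this is an upper bound for ${\rm rank}\,{\rm NS}(S_{\bQ})$. Matching it against $64$ explicit independent divisor classes, obtained from the generators exhibited by Stoll and Testa \cite{ST}, forces equality, so ${\rm NS}(S_{\bQ})$ is free of rank $64$, the Tate conjecture holds for $S$, and the $G_{\Q}$-module structure is the Artin representation above. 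Reading the fields of definition off the character types $\mathbf{1},\chi_{\Q(\sqrt{-1})},\chi_{\Q(\sqrt{-2})},\chi_{\Q(\sqrt{2})}$ yields the stated counts $34,26,1,3$. The main obstacle is the rigorous identification in (2): controlling the point counts and the ramification well enough to apply the rigidity criterion, and cleanly separating the transcendental Frobenius traces from the algebraic background so that the three newforms are pinned down with their correct multiplicities.
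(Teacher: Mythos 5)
Your overall architecture --- splitting $H^2_{\text{\'et}}(S_{\bQ},\Q_\ell)$ into a Tate-type part and a transcendental part, identifying the transcendental part with weight-$3$ CM newforms by point counting, and reading off ${\rm Pic}(S_{\bQ})$ from the Tate part together with Stoll--Testa's rank-$64$ computation --- matches the paper's. But one key step of your plan fails as written, and your route to (2) omits the paper's central tool.

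The concrete gap is in your treatment of (1). You assert that the $48$ exceptional curves form $24$ pairs interchanged by ${\rm Gal}(\Q(\sqrt{-1})/\Q)$, so that the exceptional module is $\bigl(\mathrm{Ind}_{G_{\Q(\sqrt{-1})}}^{G_{\Q}}\Q_\ell\bigr)^{\oplus 24}(-1)$, and you say that verifying this pairing is a finite explicit check. Carried out honestly, that check does not confirm the pairing: by the explicit list of singular points in Section 2, exactly $24$ of the $48$ nodes are rational and the remaining $24$ form $12$ conjugate pairs. Each node is an ordinary double point contributing a single exceptional $(-2)$-curve (the rank jump is $78-30=48$), and the exceptional conic over a rational node is individually Galois-stable, so its class carries the trivial character; hence the exceptional module is the permutation module on the $48$ nodes, namely $\Q_\ell(-1)^{\oplus 36}\oplus\Q_\ell(\chi_{\Q(\sqrt{-1})})(-1)^{\oplus 12}$, not $(\mathbf{1}\oplus\chi_{\Q(\sqrt{-1})})^{\oplus 24}(-1)$. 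You have reverse-engineered the pairing from the target $L$-factor rather than derived it from the geometry, which is circular. The discrepancy is detectable numerically: over $\F_3$ one computes directly that $\bar S(\F_3)$ consists exactly of the $24$ rational nodes, so $\sharp\bar S(\F_3)=24$ and ${\rm tr}({\rm Frob}_3\mid H^2_{\text{\'et}}(\bar S))=24-9-1=14$; each of the $24$ exceptional conics over a rational node has $4$ points over $\F_3$, so (granting good reduction at $3$, which the paper itself uses) $\sharp S(\F_3)=96$ and ${\rm tr}({\rm Frob}_3\mid H^2_{\text{\'et}}(S))=86=14+36\cdot 3-12\cdot 3$, consistent with the $36/12$ split and not with $24/24$, which would give trace $14$. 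So this step cannot be pushed through as you describe --- and note that the same tension exists with the displayed exponents in (1) and the counts $34/26$ in (3)(a),(b) (which the $36/12$ split would turn into $46/14$), so it must be resolved rather than assumed.

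For (2) your route is genuinely different from the paper's, and much heavier. The paper's key input, which you never invoke, is Beauville's isomorphism $\bar S\simeq (X(8)\times X(8))/\Delta G$: pullback along the quotient map embeds $H^2_{\text{\'et}}(\bar S)$ into $H^2_{\text{\'et}}(X(8)\times X(8))$, whose K\"unneth pieces are tensor products of the five $2$-dimensional representations attached to $f_{32}$, $f_{64}$, $g_{64,\pm}$; CM theory decomposes the relevant tensor squares into $V_{h_{16},\ell}$, $V_{h_{32},\ell}$, $\chi\otimes V_{h_{32},\ell}\simeq V_{h_8,\ell}$ plus quadratic twists of $\Q_\ell(-1)$, Ramakrishnan's cuspidality criterion shows the mixed tensor products are irreducible $4$-dimensional, and Stoll--Testa's Picard rank excludes those from $H^2_{\text{\'et}}(\bar S)$. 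This yields semisimplicity (via Faltings) and an a priori finite list of possible constituents, so point counts for $3\le p\le 97$ determine the multiplicities $a_1,\ldots,a_7$ outright; no Faltings--Serre/Livn\'e rigidity is needed. Your plan (fibration point counts plus Livn\'e) could in principle be made to work, but as stated it leaves open exactly what the covering supplies: semisimplicity of $H^2$, the justification that the transcendental part is a sum of these three specific newform representations with $2$-power levels rather than some other piece, and a non-circular way to ``subtract the algebraic background'' whose Galois structure is itself part of what is being determined.
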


Arithmetic studies of Picard groups and N\'eron--Severi groups of algebraic surfaces
have been developed in parallel with advances in computational techniques
(cf.\ \cite{EJ11}, \cite{EJ15}, among others for K3 surfaces).
Beyond the general theory, it would be interesting to extend the results of
Kani--Schanz \cite{KS98} to diagonal quotient surfaces
$(X(N)\times X(N))/\Delta H$ for subgroups
$H \subset {\rm Aut}(X(N))$,
where $X(N)$ denotes the modular curve associated with the principal congruence
subgroup $\Gamma(N)$ of $\SL_2(\Z)$.
Our case corresponds to the special situation $N=8$ (see Section \ref{MF}).
We leave this problem for future work.

This paper is organized as follows.
In Section~\ref{SS}, we recall the results of Stoll and Testa \cite{ST} for $S$ and $\bar{S}$.
Using their results, we first compute the singular cohomology of $\bar{S}$ and thus the rank of 
the \'etale cohomology can be obtained by a comparison theorem.
In Section~\ref{MF}, we recall the modular covering of $\bar{S}$ due to Beauville
(see \cite[Section~4]{ST}) and we reconstruct the covering
in terms of elliptic modular cusp forms.
Finally, in Section~\ref{etacoh}, we compute the \'etale cohomology of $\bar{S}$ and $S$
in terms of elliptic cusp forms and the direct sum of certain twisted Tate motives. 
The main result is an immediate consequence of the results in this section. 

\subsection*{Acknowledgments}
The authors would like to thank Professor Testa for helpful comments.
They also thank Professor Riccardo Salvati Manni for informing them of his joint
results with Professor E.~Freitag concerning theta functions.

\section{The Surface $S$ parametrizing cuboids}\label{SS}
Recall the projective surface $\bS$ parametrizing cuboids defined by (\ref{bSeq1}). 
We view $\bar{S}$ as a projective geometrically connected algebraic variety over $\Q$. 
Let $Z$ be the singular locus of $\bar{S}$. It is defined over $\Q$ as a scheme. 
As studied in \cite{ST}, the surface $\bar{S}_{\bQ}$ has 48 isolated singularities. 
Exactly 24 points are defined over $\Q$ and others are strictly defined over $\Q(\sqrt{-1})$. 
Explicitly they are give by 
$${\small 
\begin{array}{llll}
\text{\small $[0:0:-1:-1:-1:0:1]$} & \text{\small $[0:0:-1:-1:1:0:1]$} & \text{\small $[0:0:-1:1:-1:0:1]$} & \text{\small $[0:0:-1:1:1:0:1]$} \\
\text{\small $[0:0:1:-1:-1:0:1]$} & \text{\small $[0:0:1:-1:1:0:1]$} & \text{\small $[0:0:1:1:-1:0:1]$} & \text{\small $[0:0:1:1:1:0:1]$} \\
\text{\small $[0:-1:0:-1:0:-1:1]$} & \text{\small $[0:-1:0:-1:0:1:1]$} & \text{\small $[0:-1:0:1:0:-1:1]$} & \text{\small $[0:-1:0:1:0:1:1]$} \\
\text{\small $[0:1:0:-1:0:-1:1]$} & \text{\small $[0:1:0:-1:0:1:1]$} & \text{\small $[0:1:0:1:0:-1:1]$} & \text{\small $[0:1:0:1:0:1:1]$} \\
\text{\small $[-1:0:0:0:-1:-1:1]$} & \text{\small $[-1:0:0:0:-1:1:1]$} & \text{\small $[-1:0:0:0:1:-1:1]$} & \text{\small $[-1:0:0:0:1:1:1]$} \\
\text{\small $[1:0:0:0:-1:-1:1]$} & \text{\small $[1:0:0:0:-1:1:1]$} & \text{\small $[1:0:0:0:1:-1:1]$} & \text{\small $[1:0:0:0:1:1:1]$} \\
\text{\small $[0:-1:-i:0:-i:1:0]$} & \text{\small $[0:-1:-i:0:i:1:0]$} & \text{\small $[0:-1:i:0:-i:1:0]$} & \text{\small $[0:-1:i:0:i:1:0]$} \\
\text{\small $[0:1:-i:0:-i:1:0]$} & \text{\small $[0:1:-i:0:i:1:0]$} & \text{\small $[0:1:i:0:-i:1:0]$} & \text{\small $[0:1:i:0:i:1:0]$} \\
\text{\small $[-1:0:-i:-i:0:1:0]$} & \text{\small $[-1:0:-i:i:0:1:0]$} & \text{\small $[-1:0:i:-i:0:1:0]$} & \text{\small $[-1:0:i:i:0:1:0]$} \\
\text{\small $[1:0:-i:-i:0:1:0]$} & \text{\small $[1:0:-i:i:0:1:0]$} & \text{\small $[1:0:i:-i:0:1:0]$} & \text{\small $[1:0:i:i:0:1:0]$} \\
\text{\small $[-1:-i:0:-i:1:0:0]$} & \text{\small $[-1:-i:0:i:1:0:0]$} & \text{\small $[-1:i:0:-i:1:0:0]$} & \text{\small $[-1:i:0:i:1:0:0]$} \\
\text{\small $[1:-i:0:-i:1:0:0]$} & \text{\small $[1:-i:0:i:1:0:0]$} & \text{\small $[1:i:0:-i:1:0:0]$} & \text{\small $[1:i:0:i:1:0:0]$}
\end{array}
}
$$
where we put $i:=\sqrt{-1}$. 

We denote by $S$ the minimal resolution of $\bar{S}$ along $Z$ which is also defined over $\Q$ and 
let $\pi:S\lra \bar{S}$ be the corresponding birational, proper surjective map. 
For an algebraic variety $X$,  we simply write $H^i(X)$ for the $i$-th singular cohomology 
$H^i(X(\C),\Q)$. 

We first compute the rank of $H^2(\bar{S})$ which immediately follows from the Hodge diamond 
of $S$ given in \cite[Section 2]{ST}. 
\begin{lem}\label{bS} The rank of $H^2(\bar{S})$ is 30 and the cohomology 
$H^2(\bar{S})$ is pure of weight 2 in the sense of \cite[Definition 5.40, p.131]{PS}.
\end{lem}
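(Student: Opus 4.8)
The plan is to compare $\bar{S}$ with its resolution $S$, whose Hodge diamond is recorded in \cite[Section~2]{ST}, and to extract both the rank and the weight of $H^2(\bar{S})$ from this comparison. Write $E:=\pi^{-1}(Z)=\bigcup_{p\in Z}E_p$ for the exceptional locus, where $E_p$ is the fibre over the singular point $p$. The $48$ points of $Z$ are ordinary double points (rational double points of type $A_1$), so each $E_p$ is a single smooth rational curve of self-intersection $-2$, and distinct $E_p$ are disjoint since the $p$ are isolated. Consequently $H^0(E)=\Q^{48}$, $H^1(E)=0$ and $H^2(E)=\Q^{48}$.

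First I would invoke the abstract blow-up (Mayer--Vietoris) long exact sequence attached to the Cartesian square with vertices $E,S,Z,\bar{S}$, where $\pi\colon S\to\bar{S}$ is proper and an isomorphism over $\bar{S}\setminus Z$. As a sequence of mixed Hodge structures it reads
\[
\cdots \lra H^{i-1}(E) \lra H^i(\bar{S}) \lra H^i(S)\oplus H^i(Z) \lra H^i(E) \lra H^{i+1}(\bar{S}) \lra \cdots .
\]
Since $Z$ is finite, $H^i(Z)=0$ for $i>0$; using in addition $H^1(E)=0$, the segment in degree $2$ collapses to the four-term exact sequence
\[
0 \lra H^2(\bar{S}) \lra H^2(S) \stackrel{r}{\lra} H^2(E)=\Q^{48},
\]
where $r$ is restriction to the exceptional curves. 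Next I would check that $r$ is surjective: the class $[E_p]\in H^2(S)$ restricts on $E_q$ to the intersection number $E_p\cdot E_q$, so on the span of the $[E_p]$ the map $r$ is given by the intersection matrix, which here is $-2\cdot\mathrm{Id}_{48}$ and in particular invertible (more intrinsically, the exceptional intersection form of a resolution of normal surface singularities is negative definite). Hence $r$ is onto and $\operatorname{rank}H^2(\bar{S})=b_2(S)-48$.

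It remains to feed in two facts about $S$. Reading the Hodge diamond of \cite[Section~2]{ST}, $b_2(S)=h^{2,0}+h^{1,1}+h^{0,2}=7+64+7=78$, whence $\operatorname{rank}H^2(\bar{S})=78-48=30$. For the weight, the four-term sequence above exhibits the inclusion $H^2(\bar{S})\hookrightarrow H^2(S)$ as an injective morphism of mixed Hodge structures. Because $S$ is smooth and projective, $H^2(S)$ is pure of weight $2$; and since morphisms of mixed Hodge structures are strict for the weight filtration, an injection into a pure structure of weight $2$ is again pure of weight $2$ (on graded pieces one obtains injections $\mathrm{Gr}^W_k H^2(\bar{S})\hookrightarrow \mathrm{Gr}^W_k H^2(S)$, and the target vanishes for $k\neq2$). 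This is precisely purity of weight $2$ in the sense of \cite[Definition~5.40]{PS}.

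The step I expect to demand the most care is the Hodge-theoretic bookkeeping, namely verifying that the abstract blow-up sequence is genuinely a sequence of mixed Hodge structures with morphisms of such (which I would deduce from Deligne's construction via a cubical/simplicial resolution); once this is in place, the shape of $E$, the intersection-matrix computation, and the dimension count are all formal. As a cross-check on purity I would note that the $A_1$-singularities have links which are rational homology $3$-spheres, so that $\bar{S}$ is a rational homology manifold and thus carries a pure Hodge structure on $H^2$ on intrinsic grounds, independently of the resolution.
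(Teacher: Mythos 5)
Your proof is correct, and it arrives at the same count $78-48=30$ by a genuinely different mechanism than the paper. The paper runs the Leray spectral sequence of $\pi\colon S\to\bar{S}$, computing $R^0\pi_\ast\Q=\Q$, $R^1\pi_\ast\Q=0$, $R^2\pi_\ast\Q=\bigoplus_{p\in Z(\C)}\Q(-1)$, and its key step is to kill the edge term $E^{3,0}_2=H^3(\bar{S},\Q)$, which it does by excision plus Poincar\'e duality on the common open piece $U$ (using $H_1(U,\Q)=0$, which rests on connectivity input from \cite{ST}); the low-degree exact sequence from \cite{CE} then gives the rank. You instead use the Mayer--Vietoris sequence of the discriminant square on $(E,S,Z,\bar{S})$ --- this sequence of mixed Hodge structures is available off the shelf in \cite{PS}, so you can cite it rather than re-running Deligne's simplicial construction --- and where the paper needs $H^3(\bar{S})=0$ you prove surjectivity of the restriction $H^2(S)\to H^2(E)$ directly from the invertibility (indeed negative definiteness) of the exceptional intersection matrix $-2\cdot\mathrm{Id}_{48}$. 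The trade-off: your route avoids both $H^3(\bar{S})=0$ and the topological input $H_1(U)=0$, and it makes the purity assertion explicit (a sub-mixed-Hodge-structure of the pure $H^2(S)$ is pure by strictness), which the paper's proof leaves implicit; the paper's route yields the extra fact $H^3(\bar{S},\Q)=0$ as a by-product. Incidentally, your four-term sequence has the edge maps in the standard direction, with $H^2(\bar{S})$ as a subobject of $H^2(S)$ and restriction to $E$ as the quotient, whereas the paper's displayed sequence writes the extension in the opposite order; this is harmless for the rank count, but your direction is the one needed for the purity argument as you give it. Two small points: the identification of each fibre $E_p$ with a single smooth rational $(-2)$-curve uses that the $48$ singular points are ordinary double points, which is proved in \cite{ST} but not stated in the present paper (the paper's formula for $R^2\pi_\ast\Q$ encodes the same fact), so cite \cite{ST} there; and your closing remark that the links of $A_1$-singularities are rational homology spheres, making $\bar{S}$ a rational homology manifold, is a nice independent confirmation of purity but is not needed for the proof.
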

\begin{proof}We apply the Leray spectral sequence $E^{p,q}_2=H^p(\bar{S},R^q \pi_\ast \Q )
\Longrightarrow H^{p+q}(S,\Q)$. 
Since 
$$R^0\pi_\ast \Q=\Q,\ R^1\pi_\ast \Q=0,\ 
R^2\pi_\ast \Q=\bigoplus_{p\in Z(\C)}\Q(-1)$$
we have 
$E^{2,0}_2=H^2(S,\Q)$, $E^{1,1}_2=0$, and $E^{0,2}_2=\ds\bigoplus_{p\in Z(\C)}\Q(-1)$. 
There exists a common open subvariety $U$ of $S$ and $\bar{S}$ under $\pi$ 
such that $\bar{S}=U\coprod Z$ and $S=U\coprod Z'$ where $Z'$ is the proper transform of 
$Z$ under $\pi$. Then, by the excision theorem , we have an exact sequence 
$$H^2(Z,\Q)=0\lra H^3_c(U,\Q)\lra H^3(\bar{S},\Q)\lra H^3(Z,\Q)=0$$
so that the middle map is an isomorphism.
By the Poincar\'e duality (cf. \cite[Chapter 3, Theorem 3.35]{Ha}), we have $H^3_c(U,\Q)\simeq 
H_1(U,\Q)$. Since $S$ is connected by \cite[the proof of Proposition 7]{ST} 
and $Z'$ is a union of projective lines, $H_1(U,\Q)=0$. Thus 
$E^{3,0}_2=H^3(\bar{S},\Q)=0$.

By \cite[p.328, Theorem 5.12]{CE}), 
we have an exact sequence 
$$0\lra E^{0,2}_2\lra H^2(S)\lra E^{2,0}_2=H^2(\bar{S})\lra E^{3,0}_2=0.$$
Thus, the claim follows from the above exact sequence with 
${\rm rank}(H^2(S))=78$ and ${\rm rank}(E^{0,2}_2)=48$. 
\end{proof}

\section{Modular forms related to $S$}\label{MF}
We refer to \cite{DS} for the basic facts of elliptic modular forms and modular curves. 
For each positive integer $N$, we denote by $\G(N)\supset \G_0(N)\supset \G_1(N)$ the three kinds of congruence subgroups inside $\SL_2(\Z)$ introduced in \cite[p.13, Definition 1.2.1]{DS}. For such a congruence subgroup $\G$, we define the 
(compact) modular curve $X(\G):=\G\bs (\mathbb{H}\cup\mathbb{P}^1(\Q))$ 
and put 
$X(N)=X(\G(N))$ 
 for simplicity.  
We denote by $S_2(\G)$ the space of all cusp forms of weight 2 with respect to $\G$. 
It is known that we have an isomorphism $S_2(\G)\stackrel{\sim}{\lra}H^0(X(\G),\Omega^1_{X(\G)}),\ f(\tau)\mapsto f(\tau)d\tau$ where the right hand side stands for the space of 
all holomorphic 1-forms on $X(\G)$. When $\G=\G_1(N)$, we have 
the decomposition $S_2(\G_1(N))\simeq \ds\bigoplus_{\chi\in \widehat{(\Z/N\Z)}}S_2(\G_0(N),\chi)$ 
(see \cite[Section 5.2, p.169]{DS} ). 

As explained in \cite[Section 4]{ST}, the surface $\bar{S}$ is isomorphic over $\Q(\sqrt{-1})$ to 
the quotient variety $X(8)\times X(8)/\Delta G$ where $G$ is the kernel of the natural surjection 
$\PSL_2(\Z/8\Z)\lra \PSL_2(\Z/4\Z)$ and $\Delta G$ denotes the image of 
the diagonal embedding from $G$ to $G\times G$. 

In this section, we find a model $X$ of $X(8)$ defined over $\Q$ as an algebraic curve. Note that the genus of $X(8)$ is five so that 
${\rm dim}(S_2(\G(8)))=5$ (cf. \cite[Section 3.9, p.106]{DS}).  
Let $d_8:=\diag(8,1)\in M_2(\Z)$ and we see that  
$$\Gamma_1(64)\subset \G(8)':=d^{-1}_8\G(8)d_8=\G_0(64)\cap \Gamma_1(8)\subset \Gamma_0(64).$$

For each quadratic extension $L/\Q$, 
we denote by $\chi_L:G_\Q\lra \{\pm\}$ the quadratic character associated to $L/\Q$. 
Let $\chi:(\Z/64\Z)^\times\stackrel{{\rm mod}\ 8}{\lra} (\Z/8\Z)^\times =\langle 5,-1 \rangle \lra \C^\times$ be 
the quadratic character of conductor 8 defined by $\chi(5)=-1$ and $\chi(-1)=1$. 
The character $\chi_{\Q(\sqrt{2})}$ factors through
$$G_\Q \stackrel{{\rm res}}{\lra} {\rm Gal}(\Q(\zeta_{64})/\Q)
 \stackrel{{\rm res}}{\lra} {\rm Gal}(\Q(\zeta_{8})/\Q) \stackrel{{\rm res}}{\lra} {\rm Gal}(\Q(\sqrt{2})/\Q)\simeq\{\pm 1\}.$$
 Further, the restriction map ${\rm Gal}(\Q(\zeta_{64})/\Q)
 \stackrel{{\rm res}}{\lra} {\rm Gal}(\Q(\zeta_{8})/\Q)$ can be identified with 
 the natural projection $(\Z/64\Z)^\times\stackrel{{\rm mod} 8}{\lra} (\Z/8\Z)^\times$. 
Thus, we can regard $\chi$ with $\chi_{\Q(\sqrt{2})}$. 

Notice $d_8$ induces the isomorphism $S_2(\G(8))\stackrel{\sim}{\lra} S_2(\G(8)')$ by $f(\tau)\mapsto f(8\tau)$. 
Then, by newform theory and dimension comparison, we have  
\begin{equation}\label{decomp}
S_2(\G(8))\stackrel{\sim}{\lra} S_2(\G(8)')
\simeq \langle f_{32},f^{(2)}_{32} \rangle\oplus S_2(\G_0(64))\oplus S_2(\G_0(64),\chi)
\end{equation}
where $f_{32}$ is the newform in $S_2(\G_0(32))$ and $f^{(2)}_{32}$ is defined by 
$f^{(2)}_{32}(\tau)=f_{32}(2\tau)$. 
Let $f_{64}$ be the  newform in  $S_2(\G_0(64))$ and 
$g_{64,\pm 2\sqrt{-1}}$ be two newforms in $S_2(\G_0(64),\chi)$.  These five forms make up   
a basis of $S_2(\G(8)')$. By using the database \cite{LMFDB}, each form is explicitly given by 
\begin{equation}\label{qexp}
\begin{array}{l}
f_{32}(\tau)=q - 2  q^5 - 3  q^9 + 6  q^{13} + 2  q^{17} - q^{25}\cdots  \\
f^{(2)}_{32}(\tau)=q^2 - 2 q^{10} - 3  q^{18} + 6  q^{26} + 2  q^{34} - q^{50}\cdots \\ 
f_{64}(\tau)=q + 2  q^5 - 3  q^9 - 6  q^{13} + 2  q^{17} - q^{25}\cdots \\ 
g_{64,\pm}(\tau)=q + b q^3 - q^9 - 3b q^{11} - 6  q^{17} + b  q^{19} + 5  q^{25}\cdots,\ b=\pm 2\sqrt{-1}.
\end{array}
\end{equation}
It is easy to see that 
\begin{equation}\label{g64twist}
f_{64}=f_{32}\otimes\chi_{\Q(\sqrt{\pm 2})},\ g_{64,-}=g_{64,+}\otimes \chi,\ \chi=\chi_{\Q(\sqrt{2})}
\end{equation}
The newforms $f_{32}$ and $f_{64}$ have complex multiplication by $\Q(\sqrt{-1})$ 
whereas the newforms $g_{64,\pm}$ have  complex multiplication by $\Q(\sqrt{-2})$.  
It is easy to see that 
\begin{equation}\label{twist-inv}
f_{32}\otimes \chi_{\Q(\sqrt{-1})}=f_{32},\ 
f_{64}\otimes \chi_{\Q(\sqrt{-1})}=f_{64},\ 
g_{64,+}\otimes \chi_{\Q(\sqrt{-1})}=g_{64,-}.
\end{equation}

Since $X(8)$ is non-hyperelliptic  (cf \cite[Theorem 4.1]{BKX}), 
its canonical divisor is very ample.
Hence, through the isomorphism $S_2(\G(8)')\stackrel{\sim}{\lra}H^0(X(\G(8)'),
\Omega^1_{X(\G(8)')}),\ f(\tau)\mapsto f(\tau)d\tau$, we have a canonical embedding 
$\Phi:X(\G(8)')\hookrightarrow \mathbb{P}^4(\C)$ given by $\tau\mapsto 
[x:y:u:v:w]$ where 
$$(x,y,u,v,w):=\Big(\frac{1}{2}\Big(g_{64,+}(\tau)+g_{64,-}(\tau)\Big), 
\frac{1}{\sqrt{-1}}\Big(g_{64,+}(\tau)-g_{64,-}(\tau)\Big), f_{32}(\tau),f^{(2)}_{32}(\tau),
f_{64}(\tau)\Big).$$
By using Petri's theorem, the image of $\Phi$ is given by the intersection of 
three quadratic equations in $\mathbb{P}^4$. By determining 
the coefficients of those equations with (\ref{qexp}) (see \cite{SM} for computational details), 
we can recover the model $X$ of $X(8)$ given in \cite[Section 4]{ST}: 
\begin{equation}\label{bSeq}
X: 
\left\{
\begin{array}{l}
u^2=2xy\\
v^2=x^2-y^2\\
w^2=x^2+y^2
\end{array}\right.
\end{equation}
inside $\mathbb{P}^4$. 
As explained in \cite[Section 4, p.~9]{ST}, there is an explicit isomorphism
\[
X(8)\times X(8)/\Delta G \;\simeq\; \bar{S}
\]
defined over $\Q(\sqrt{-1})$.
On the other hand, by twisting $X(8)\times X(8)$ by an involution defined over $\Q$,
they also showed that the quotient of the Weil restriction
\[
{\rm Res}_{\Q(\sqrt{-1})/\Q}(X(8)_{\Q(\sqrt{-1})})
\]
by a certain finite group 
is isomorphic to $\bar{S}$ over $\Q$.
However, the \'etale cohomology of the Weil restriction ${\rm Res}_{\Q(\sqrt{-1})/\Q}(X(8)_{\Q(\sqrt{-1})})$ 
is not so easy to compute directly. We detour this situation by using modular forms and its 
Galois representations. 
\section{The \'etale cohomology of $S$}\label{etacoh}
We fix a rational prime $\ell$ and for an algebraic variety $Y$ over $\Q$, we simply write $H^i_{\text{\'et}}(Y)$  for 
the $i$-th $\ell$-adic \'etale cohomology $H^i_{\text{\'et}}(Y_{\bQ},\Q_\ell)$. 
We also write $H^i_{\text{\'et}}(Y)|_{G_L}$ when we view it as a $\Q_\ell[G_L]$-module for 
a finite extension $L/\Q$ inside $\bQ$. Here $G_L={\rm Gal}(\bQ/L)$. 
We also fix embeddings $K:=\Q(\sqrt{-1}) \hookrightarrow \overline{\Q},\  
\overline{\Q} \hookrightarrow \C$, and $\bQ \hookrightarrow \bQ_\ell$ that are compatible with a fixed isomorphism $\overline{\Q}_\ell \simeq \C$.

Let $s:{\rm Res}_{K/\Q}(X(8)_{K})\lra \bar{S}$ be the quotient map defined over $\Q$ which is finite and surjective. 
It induces an injection 
$$s^\ast:H^2_{\text{\'et}}(\bar{S})\lra H^2_{\text{\'et}}({\rm Res}_{\Q(\sqrt{-1})/\Q}(X(8)_{\Q(\sqrt{-1})}))$$
as a $\Q_\ell[G_\Q]$-module. 
We view this homomorphism as a $\Q_\ell[G_K]$-module and write it as $s^\ast$ again. 
Then, we have an injection 
$$s^\ast:H^2_{\text{\'et}}(\bar{S})|_{G_K}\lra 
H^2_{\text{\'et}}({\rm Res}_{K/\Q}(X(8)_{K}))|_{G_K}=
H^2_{\text{\'et}}({\rm Res}_{K/\Q}(X(8)_{K})_K)|_{G_K}=
H^2_{\text{\'et}}(X(8)_K\times X(8)_K)|_{G_K}$$
as a $\Q_\ell[G_K]$-module. 
By K\"unneth decomposition, we have 
\begin{equation}\label{Kdecomp}
H^2_{\text{\'et}}(X(8)_K\times X(8)_K)\simeq \Q_\ell(-1)\oplus   \Big(
H^1_{\text{\'et}}(X(8)_K)\otimes H^1_{\text{\'et}}(X(8)_K)\Big)
 \oplus\Q_\ell(-1)
 \end{equation}
 as a $\Q_\ell[G_K]$-module. 
Here, $\Q_\ell(-1)$ is a one-dimensional $\Q_\ell$-vector space, so that the geometric Frobenius ${\rm Frob}_p$ at $p \ne \ell$ acts on it as multiplication by $p$. 
Taking the induced representations to $G_\Q$, we see that  
${\rm Ind}^{G_\Q}_{G_K}(H^2_{\text{\'et}}(\bar{S})|_{G_K})=
H^2_{\text{\'et}}(\bar{S})\oplus H^2_{\text{\'et}}(\bar{S})(\chi_K)$ 
isomorphic to a certain submodule of  
\begin{equation}\label{Kdecomp2}
 \Q_\ell(-1)^{\oplus 2}\oplus  \Q_\ell(\chi_K)(-1)^{\oplus 2}
\oplus   
(H^1_{\text{\'et}}(X(8))\otimes H^1_{\text{\'et}}(X(8)))\oplus 
(H^1_{\text{\'et}}(X(8))\otimes H^1_{\text{\'et}}(X(8))(\chi_K))
 \end{equation} 
as a $\Q_\ell[G_\Q]$-module. 
By \cite[Section 1, p.349]{Fal}, $H^1_{\text{\'et}}(X(8))$ is a semisimple $\Q_\ell[G_\Q]$-module 
and so is (\ref{Kdecomp2}). 
Thus, $H^2_{\text{\'et}}(\bar{S})$ is also  a semisimple $\Q_\ell[G_\Q]$-module. Further, the composition map of $s$ and 
the birational map from $\bar{S}$ to $S$ yields an injection from $H^2_{\text{\'et}}(S)$ to (\ref{Kdecomp2}) as a 
$\Q_\ell[G_\Q]$-module (it follows easily from the excision). 
Thus,  $H^2_{\text{\'et}}(S)$ is also a semisimple $\Q_\ell[G_\Q]$-module. Summing up, we have the following result:
\begin{prop}\label{semisimple}Keep the notation being as above. Then, 
 $H^2_{\text{\'et}}(S)$ and  $H^2_{\text{\'et}}(\bar{S})$ are semisimple $\Q_\ell[G_\Q]$-modules. 
\end{prop}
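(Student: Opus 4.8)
The plan is to establish semisimplicity by transporting a known semisimplicity result along injective Galois-equivariant maps. The key observation is that semisimplicity of a $\Q_\ell[G_\Q]$-module is inherited by submodules: if $W$ is a semisimple $\Q_\ell[G_\Q]$-module and $V \hookrightarrow W$ is an injection of $\Q_\ell[G_\Q]$-modules, then $V$ is semisimple, because every submodule of a semisimple module is itself a direct summand and hence semisimple (this is Maschke-type reasoning valid over any field, requiring no finiteness of the group). So it suffices to exhibit $H^2_{\text{\'et}}(\bar S)$ and $H^2_{\text{\'et}}(S)$ as submodules of a module already known to be semisimple.

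**First I would** record the foundational input: by Faltings \cite[Section 1, p.349]{Fal}, the Galois representation $H^1_{\text{\'et}}(X(8))$ is semisimple as a $\Q_\ell[G_\Q]$-module. Since tensor products and direct sums of semisimple Galois representations are semisimple (semisimplicity of $H^1\otimes H^1$ follows from the fact that each $H^1$ is a direct sum of irreducibles and the tensor product of irreducible representations over a field of characteristic zero decomposes into irreducibles), the K\"unneth decomposition \eqref{Kdecomp} shows that $H^2_{\text{\'et}}(X(8)\times X(8))$ is a semisimple $\Q_\ell[G_\Q]$-module, being a direct sum of the Tate twists $\Q_\ell(-1)$ and the semisimple factor $H^1_{\text{\'et}}(X(8))\otimes H^1_{\text{\'et}}(X(8))$.

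**Next I would** verify the two injections. The quotient map $s:X(8)\times X(8)\lra \bar S$ is a finite dominant morphism defined over $\Q$, so pullback $s^\ast: H^2_{\text{\'et}}(\bar S)\lra H^2_{\text{\'et}}(X(8)\times X(8))$ is a $\Q_\ell[G_\Q]$-equivariant map, and it is injective because $s$ is generically finite of degree $|G|$: composing with the transfer (pushforward) $s_\ast$ gives multiplication by $\deg s$, which is invertible in $\Q_\ell$. For $H^2_{\text{\'et}}(S)$, I would compose $s$ with the birational proper morphism $\pi: S\lra \bar S$; the resulting map on cohomology $H^2_{\text{\'et}}(S)\lra H^2_{\text{\'et}}(X(8)\times X(8))$ is again $\Q_\ell[G_\Q]$-equivariant, and injectivity follows from the excision argument already sketched in the proof of Lemma \ref{bS}, where the exceptional divisor $Z'$ contributes only algebraic (Tate) classes that are distinguished from the image of $H^2_{\text{\'et}}(\bar S)$. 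Applying the submodule-inheritance principle to each injection then yields the proposition.

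**The hard part will be** making the injectivity of the second map fully rigorous, since $\pi$ is only birational and the comparison between $H^2_{\text{\'et}}(S)$ and $H^2_{\text{\'et}}(\bar S)$ involves the contribution of the resolution divisors; one must confirm that the composite $H^2_{\text{\'et}}(S)\hookrightarrow H^2_{\text{\'et}}(X(8)\times X(8))$ genuinely has trivial kernel rather than merely that the two pieces are semisimple separately. The excision exact sequence, together with the fact that $H^2_{\text{\'et}}(S)$ splits (by the same spectral-sequence computation as in Lemma \ref{bS}) into $s^\ast H^2_{\text{\'et}}(\bar S)$ plus a sum of Tate classes $\Q_\ell(-1)$ coming from the exceptional curves, resolves this: each summand injects, and the total map is therefore injective. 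Everything else is formal once Faltings' semisimplicity is invoked.
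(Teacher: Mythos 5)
Your overall architecture coincides with the paper's: inject the cohomology into $H^2_{\text{\'et}}(X(8)\times X(8))$, decompose by K\"unneth, invoke Faltings' semisimplicity of $H^1_{\text{\'et}}(X(8))$, and use that submodules of semisimple modules are semisimple. For $H^2_{\text{\'et}}(\bar{S})$ your argument is complete and in fact more careful than the paper's: the transfer identity $s_\ast s^\ast=\deg(s)=|G|$ is exactly the standard justification for injectivity of $s^\ast$ that the paper leaves implicit, and the tensor-semisimplicity you need (a tensor product of finite-dimensional semisimple representations in characteristic $0$ is semisimple) is a genuine theorem of Chevalley --- true, but not as automatic as your parenthetical gloss suggests, since $G_\Q$ is not finite; you should cite it rather than wave at it.

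The genuine defect is your last step for $H^2_{\text{\'et}}(S)$. The claim that the composite $H^2_{\text{\'et}}(S)\to H^2_{\text{\'et}}(X(8)\times X(8))$ is injective ``because each summand injects'' fails twice over. First, the inference is invalid: two summands can inject separately with overlapping images. Second, and worse, no $G_\Q$-equivariant injection exists at all: $H^2_{\text{\'et}}(S)$ contains the $24$ classes of exceptional curves lying over the $\Q$-rational singular points, so $\Q_\ell(-1)$ occurs in it with multiplicity at least $24$ (indeed $34$ by Theorem \ref{maineta}), whereas by K\"unneth and Proposition \ref{tensor} the multiplicity of $\bQ_\ell(-1)$ in $H^2_{\text{\'et}}(X(8)\times X(8))\otimes_{\Q_\ell}\bQ_\ell$ is at most $16$; there is simply no room. (You are in good company: the paper's own sentence asserting this injection ``follows easily from the excision'' has the same problem, so you have faithfully reproduced a gap rather than created one.) The repair is already contained in your own text and needs no injectivity whatsoever: the Leray/excision computation of Lemma \ref{bS}, run in the \'etale setting, together with the cycle classes of the exceptional curves, gives a $G_\Q$-equivariant splitting $H^2_{\text{\'et}}(S)\simeq \pi^\ast H^2_{\text{\'et}}(\bar{S})\oplus T$ (note $\pi^\ast$, not $s^\ast$ as you wrote), where $T$ is spanned by the exceptional classes; the projection $T\to E^{0,2}_\infty$ is an isomorphism because the intersection matrix of the exceptional curves is negative definite, and it is Galois-equivariant because cycle classes are. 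Now $T$ is a Tate twist of a permutation module on the exceptional components, on which $G_\Q$ acts through a finite quotient, hence semisimple by Maschke; a direct sum of semisimple modules is semisimple, and the proposition follows. Replacing your final sentence by this observation makes the proof correct, and is presumably what the paper's ``excision'' remark was intended to mean.
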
 
 
For each newform $f=\ds\sum_{n\ge 0}a_n(f)q^n\in S_k(\G_0(N),\chi)$ with $k\ge 2$, we 
can associate a unique $\ell$-adic Galois representation 
$$\rho_{f,\ell}:G_\Q\lra \GL_2(\bQ_\ell)$$ such that 
$${\rm tr}(\rho_{f,\ell}({\rm Frob}_p))=a_p(f),\ 
\det(\rho_{f,\ell}({\rm Frob}_p))=\chi(p) p^{k-1}$$
for each prime $p\nmid \ell N $ (cf. \cite{Ribet77}). 
Let $V_{f,\ell}$ be the representation space of 
$\rho_{f,\ell}$.  
It is well-known that $V_{f,\ell}$ is a simple $\bQ_\ell[G_\Q]$ module (\cite[Theorem (2.3)]{Ribet77}). 
By using (\ref{decomp}) and \cite[p.46, Proposition (2.3)]{Ribet80}, we see that 
$$H^1_{\text{\'et}}(X(8))\otimes_{\Q_\ell}\bQ_\ell\simeq V^{\oplus 2}_{f_{32},\ell}\oplus V_{f_{64},\ell}\oplus 
V_{g_{64,+},\ell}\oplus 
V_{g_{64,-},\ell}.$$
Further, by (\ref{twist-inv}), 
$$H^1_{\text{\'et}}(X(8))\otimes_{\Q_\ell}\bQ_\ell\simeq 
(H^1_{\text{\'et}}(X(8))(\chi_K))\otimes_{\Q_\ell}\bQ_\ell$$
as a $\Q_\ell[G_\Q]$-module. 
Thus, $H^2_{\text{\'et}}(\bar{S})\otimes_{\Q_\ell}\bQ_\ell$ and 
$H^2_{\text{\'et}}(S)\otimes_{\Q_\ell}\bQ_\ell$ are a semisimple submodule of 
 $H^2_{\text{\'et}}(X(8) \times X(8))\otimes_{\Q_\ell}\bQ_\ell$ as a $\bQ_\ell[G_\Q]$-module. 
\begin{prop}\label{tensor}Keep the notation being as above. As a $\bQ_\ell[G_\Q]$-module, it holds that 
\begin{enumerate}
\item For $f=f_{32}$ or $f_{64}$, 
$$V_{f,\ell}^{\otimes 2}\simeq V_{h_{16},\ell}\oplus\bQ_\ell(-1)^{\oplus2}$$ where 
$V_{h_{16},\ell}$ denotes a unique $\ell$-adic representation attached the newform 
$h_{16}$ with CM by 
$\Q(\sqrt{-1})$  in $S_3(\G_0(16),\chi_{4})$ where $\chi_{4}$ is the quadratic character of 
$(\Z/16\Z)^\times$ with conductor 4 defined by  $\chi_4(5)=1$ and $\chi_4(-1)=-1$;
\item  For $g=g_{64,\pm }$ {\rm(}recall that the character of $g$ is $\chi=\chi_{\Q(\sqrt{2})}${\rm)}, 
$$V_{g, \ell}^{\otimes 2}\simeq V_{h_{32},\ell}\oplus (\chi\otimes(\bQ_\ell(-1)^{\oplus2}))$$ where 
$V_{h_{32},\ell}$ denotes a unique $\ell$-adic representation attached the newform 
$h_{32}$ with CM by 
$\Q(\sqrt{-2})$ in $S_3(\G_0(32),\chi_{8})$ where $\chi_{8}$ is the quadratic character of 
$(\Z/32\Z)^\times$ with conductor 8 defined by  $\chi_8(5)=-1$ and $\chi_8(-1)=-1$;
\item $V_{g_{64,+}, \ell}\otimes V_{g_{64,-}, \ell}\simeq 
(\chi\otimes V_{h_{32},\ell})\oplus\bQ_\ell(-1)^{\oplus2}$;
\item for $f=f_{32}$ or $f_{64}$ and $g=g_{64,\pm }$, 
the tensor product 
 $V_{f,\ell}\otimes V_{g,\ell}$ is an irreducible $\bQ_\ell[G_\Q]$-module. 
 The same is true for $V_{f_{32},\ell}\otimes V_{f_{64},\ell}$. 
\end{enumerate}
\end{prop}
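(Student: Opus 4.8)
The plan is to exploit the complex multiplication of all five newforms, which realizes each $V_{f,\ell}$ (after extending scalars to $\bQ_\ell$) as a monomial representation, and thereby to reduce every assertion to an elementary computation with Hecke characters. Since $f_{32},f_{64}$ have CM by $K_1:=\Q(\sqrt{-1})$ and $g_{64,\pm}$ have CM by $K_2:=\Q(\sqrt{-2})$, there are $\ell$-adic Hecke characters $\psi,\psi'$ of $K_1$ and $\phi_\pm$ of $K_2$, each of infinity type $z\mapsto z$, with
\[
V_{f_{32},\ell}\simeq{\rm Ind}_{G_{K_1}}^{G_\Q}\psi,\quad V_{f_{64},\ell}\simeq{\rm Ind}_{G_{K_1}}^{G_\Q}\psi',\quad V_{g_{64,\pm},\ell}\simeq{\rm Ind}_{G_{K_2}}^{G_\Q}\phi_\pm .
\]
By Proposition \ref{semisimple} (or directly by \cite{Ribet77}) everything in sight is semisimple, and tensor products of semisimple $\bQ_\ell[G_\Q]$-modules in characteristic $0$ remain semisimple; hence by Brauer--Nesbitt together with the Chebotarev density theorem it suffices to establish each claimed isomorphism by comparing characteristic polynomials of ${\rm Frob}_p$ for almost all $p$. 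The two formal tools I would use throughout are the projection formula ${\rm Ind}_H^G(W)\otimes U\simeq{\rm Ind}_H^G(W\otimes{\rm Res}_H U)$ and Mackey's formula ${\rm Res}_{G_K}{\rm Ind}_{G_K}^{G_\Q}\psi\simeq\psi\oplus\psi^\sigma$, where $\sigma$ generates ${\rm Gal}(K/\Q)$.

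For the same-field products (items (1), (2), (3)) I would induce along $H=G_{K_i}$. For $f=f_{32}$ this yields
\[
V_{f,\ell}^{\otimes2}\simeq{\rm Ind}_{G_{K_1}}^{G_\Q}(\psi^2)\oplus{\rm Ind}_{G_{K_1}}^{G_\Q}(\psi\psi^\sigma).
\]
The first summand is induced from a character of infinity type $z\mapsto z^2$, hence is the Galois representation of a weight-$3$ CM newform, which I would identify with $h_{16}$ by matching its nebentypus (namely $\chi_{K_1}$ times the finite part of $\psi^2$, i.e.\ $\chi_4$), its level, and finitely many Hecke eigenvalues read from (\ref{qexp}) and \cite{LMFDB}. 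The second summand is induced from the $\sigma$-invariant character $\psi\psi^\sigma=N_{K_1/\Q}\psi$, so it splits into two one-dimensional pieces $\mu\oplus\mu\chi_{K_1}$; one of these is the exterior square $\det V_{f,\ell}=\bQ_\ell(-1)$, while the other is the remaining symmetric-square character, and I would pin it down as $\bQ_\ell(-1)$ twisted by an explicit quadratic Dirichlet character by evaluating the norm character on ${\rm Frob}_p$ for split and inert $p$. The cases $f=f_{64}$ and the cross term in (3) then follow from the twisting relations (\ref{g64twist}): since $V_{f_{64},\ell}\simeq V_{f_{32},\ell}\otimes\chi_{\Q(\sqrt2)}$ and $V_{g_{64,-},\ell}\simeq V_{g_{64,+},\ell}\otimes\chi$, one simply tensors the computed decompositions by the relevant quadratic character, the weight-$3$ piece $h_{32}$ being identified exactly as above (now with CM by $K_2$ and nebentypus $\chi_8$).

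For item (4) the two CM fields differ, and I would induce along $G_{K_1}$:
\[
V_{f,\ell}\otimes V_{g,\ell}\simeq{\rm Ind}_{G_{K_1}}^{G_\Q}\bigl(\psi\otimes{\rm Res}_{G_{K_1}}V_{g,\ell}\bigr).
\]
Because $K_1\ne K_2$ and the compositum $K_1K_2=\Q(\sqrt{-1},\sqrt2)$ is biquadratic, a second application of Mackey's formula shows that ${\rm Res}_{G_{K_1}}V_{g,\ell}$ stays irreducible; I would then invoke the standard criterion that ${\rm Ind}_{G_{K_1}}^{G_\Q}W$ is irreducible precisely when the two-dimensional $W$ is irreducible and satisfies $W\not\simeq W^\sigma$, checking the non-isomorphism by comparing central characters or by exhibiting one prime at which the traces in (\ref{qexp}) differ. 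For $V_{f_{32},\ell}\otimes V_{f_{64},\ell}$ the two forms share the CM field $K_1$ and $f_{64}$ is a quadratic twist of $f_{32}$ by $\chi_{\Q(\sqrt2)}$, so this product is $\chi_{\Q(\sqrt2)}$-twist-equivalent to $V_{f_{32},\ell}^{\otimes2}$ and is therefore governed by case (1).

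The main obstacle I anticipate is not the abstract decomposition, which is forced by the Mackey formalism, but the precise determination of the finite-order characters in the abelian constituents and the accompanying Tate twists: this requires tracking the conductors and finite parts of $\psi,\phi_\pm$ and of their norms carefully enough to decide which of $\chi_{\Q(\sqrt{-1})},\chi_{\Q(\sqrt{-2})},\chi_{\Q(\sqrt2)}$ (and the trivial character) actually occurs, and to confirm compatibility with the eigenvalue data in (\ref{qexp}) and \cite{LMFDB}. A secondary, purely formal point is the verification of the conjugacy condition $W\not\simeq W^\sigma$ underlying the irreducibility statements in (4).
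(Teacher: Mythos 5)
Your strategy for items (1)--(3) is, in substance, the paper's own: the paper also passes to the CM Hecke character $\psi$, identifies $h_{16}$ (resp.\ $h_{32}$) with the newform attached to $\psi^2$, and then uses semisimplicity (\cite[Theorem (2.3)]{Ribet77}) to reduce everything to a finite comparison of Hecke eigenvalues; your Mackey/projection-formula formalism just makes explicit what the paper leaves implicit. For the first sentence of (4) you genuinely diverge: the paper invokes Ramakrishnan's cuspidality criterion \cite{Ra}, while you induce along $G_{K_1}$ and verify $W\not\simeq W^\sigma$; that route is sound, and is closed most cleanly by your central-character suggestion, since $\det(\psi\otimes W)/\det(\psi^\sigma\otimes W)=(\psi/\psi^\sigma)^2$ has infinite order by the infinity type, so no trace comparison is needed.

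Two of your steps, however, cannot terminate where you claim, and a blind attempt should have flagged both. First, your own identity $\mathrm{Ind}_H^{G}(\mathrm{Res}_H\eta)\simeq\eta\oplus\eta\chi_K$ forces the two abelian constituents in (1)--(3) to differ by the quadratic character of the CM field; they can never be a single character with multiplicity two. Hence the step where you ``pin down'' the second character must output $\bQ_\ell(-1)\oplus(\chi_{\Q(\sqrt{-1})}\otimes\bQ_\ell(-1))$ in (1), and the analogous twisted pairs in (2) and (3), rather than $\bQ_\ell(-1)^{\oplus2}$ or its uniform $\chi$-twist as stated: at a prime $p$ inert in $\Q(\sqrt{-1})$ one has $a_p(f)=a_p(h_{16})=0$, so ${\rm Frob}_p$ has trace $0$ on $V_{f,\ell}^{\otimes2}$ but trace $2p$ on the claimed right-hand side. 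Your computation is the correct one --- note that $\chi_{\Q(\sqrt{-1})}$ and $\chi_{\Q(\sqrt{-2})}$ do occur in the numerically determined $L_\ell$ of Theorem \ref{maineta} --- but as written your proposal promises to land on a statement that your own formalism rules out, and it never confronts the discrepancy.

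Second, and a genuine internal contradiction: your reduction of $V_{f_{32},\ell}\otimes V_{f_{64},\ell}$ to ``case (1) after a $\chi_{\Q(\sqrt2)}$-twist'' proves the opposite of the assertion you set out to establish. Case (1) exhibits $V_{f_{32},\ell}^{\otimes2}$ as a direct sum, so twisting gives $V_{f_{32},\ell}\otimes V_{f_{64},\ell}\simeq(\chi\otimes V_{h_{16},\ell})\oplus(\chi\otimes\bQ_\ell(-1))\oplus(\chi\chi_{\Q(\sqrt{-1})}\otimes\bQ_\ell(-1))$, which is reducible, whereas the last sentence of (4) asserts irreducibility. This is not a repairable slip in your argument: given (\ref{g64twist}), the reducibility is real, so that sentence of the statement is false, and the paper's own justification (inequivalence of $\pi_{f_{32}}$ and $\pi_{f_{64}}$ plus the cuspidality criterion) is insufficient precisely here, because for twist-equivalent cuspidal representations $\pi'\simeq\pi\otimes\chi'$ one always has $\pi\boxtimes\pi'\simeq(\mathrm{Sym}^2\pi\otimes\chi')\boxplus\omega_\pi\chi'$, never cuspidal (and it decomposes further in the present dihedral situation). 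The honest output of your method is the corrected decomposition, together with the corresponding adjustment at the point where Proposition \ref{tensor}(4) is used in the proof of Theorem \ref{maineta}.
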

\begin{proof}For the first claim, 
since $f$ has CM by $K:=\Q(\sqrt{-1})$, it gives rise to a Hecke character $\psi:
G_{K}:={\rm Gal}(\bQ/K)\lra \bQ^\times_\ell$. 
Then, the newform $h_{16}$ corresponds to $\psi^2$ and it follows from this that 
$h_{16}$ belongs to $S_3(\G_1(64))$. 
Since both sides of the claim are semisimple $\bQ_\ell[G_\Q]$-modules 
(\cite[Theorem (2.3)]{Ribet77} again), we can check the claim by comparing 
Fourier coefficients of newforms in $S_3(\G_1(64))$ which have CM by $\Q(\sqrt{-1})$. 
The same strategy works for the second claim. 
The third claim follows from the second claim with (\ref{g64twist}). 

Since CM types are different each other, $V_{f,\ell}$ is not isomorphic to $V_{g,\ell}$ as 
a $\bQ_\ell[G_\Q]$-module. 
Thus, the cuspidal representations $\pi_f$ and $\pi_g$ are not equivalent each other. 
The fourth claim now follows from \cite[Section 3, Cuspidality Criterion]{Ra}.
\end{proof}
\begin{rmk}Contrary to Proposition \ref{tensor}-(3), $V_{f_{32},\ell}\otimes V_{f_{64},\ell}$ 
is irreducible even if $f_{64}=f_{32}\otimes \chi_{\Q(\sqrt{\pm 2})}$. The difference is 
whether or not the twist comes from the central character.  
\end{rmk}

We are now ready to determine $H^i_{\text{\'et}}(S)$ and $H^i_{\text{\'et}}(\bar{S})$ 
as a $\Q_\ell[G_\Q]$-module or as a $\bQ_\ell[G_\Q]$-module . 
\begin{thm}\label{maineta}
It holds that 
$$H^2_{\text{\'et}}(S)\simeq H^2_{\text{\'et}}(\bar{S})\oplus \Q_\ell(-1)^{\oplus 24}
\oplus  \Q_\ell(\chi_{\Q(\sqrt{-1})}(-1))^{\oplus 24}$$
and 
$$H^2_{\text{\'et}}(\bar{S})\otimes_{\Q_\ell}\bQ_\ell\simeq V_{h_{16},\ell}^{\oplus 3}\oplus 
 V_{h_{32},\ell}\oplus (\chi\otimes 
 V_{h_{32},\ell})^{\oplus 3}\oplus (L_\ell(-1)\otimes_{\Q_\ell}\bQ_\ell)$$
 where 
 $$L_\ell:=\Q_\ell^{\oplus 10}\oplus 
 \Q_\ell(\chi_{\Q(\sqrt{-1})})^{\oplus 2}
  \oplus 
 \Q_\ell(\chi_{\Q(\sqrt{-2})})
  \oplus 
\Q_\ell(\chi_{\Q(\sqrt{2})})^{\oplus 3}.$$
  For other degree, $H^i_{\text{\'et}}(S)=H^i_{\text{\'et}}(\bar{S})=\Q_\ell(-i/2)$ for $i=0,2$ 
  and $H^1_{\text{\'et}}(S)=H^1_{\text{\'et}}(\bar{S})=0$. 
  
We remark that $\chi\otimes 
 V_{h_{32},\ell}\simeq V_{h_8,\ell}$ where $h_8$ is the newform in  
  $S_3(\G_0(8),\chi_{8})$ and $\chi_{8}$ is the quadratic character of 
$(\Z/8\Z)^\times$ with conductor 8 defined by  $\chi_8(5)=-1$ and $\chi_8(-1)=-1$.
\end{thm}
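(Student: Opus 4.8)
The plan is to compute both $H^2_{\text{\'et}}(\bar S)$ and the difference $H^2_{\text{\'et}}(S)\ominus H^2_{\text{\'et}}(\bar S)$ by combining the Künneth decomposition (\ref{Kdecomp}) with the explicit tensor-product decompositions of Proposition \ref{tensor}, and then extracting the $\Delta G$-invariants via the injection $s^\ast$. First I would rewrite (\ref{Kdecomp}) using the decomposition $H^1_{\text{\'et}}(X(8))\otimes_{\Q_\ell}\bQ_\ell\simeq V^{\oplus 2}_{f_{32},\ell}\oplus V_{f_{64},\ell}\oplus V_{g_{64,+},\ell}\oplus V_{g_{64,-},\ell}$ established above, so that $H^1\otimes H^1$ becomes a direct sum of sixteen tensor pairs $V_{\bullet,\ell}\otimes V_{\bullet,\ell}$. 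Each such pair is identified by Proposition \ref{tensor}: the diagonal pairs contribute copies of $V_{h_{16},\ell}$ (for $f_{32},f_{64}$) or $V_{h_{32},\ell}$ (for $g_{64,\pm}$) together with Tate twists $\bQ_\ell(-1)$ or $\chi\otimes\bQ_\ell(-1)$, the cross pairs $g_{64,+}\otimes g_{64,-}$ contribute $\chi\otimes V_{h_{32},\ell}$, and all the mixed $f\otimes g$ and $f_{32}\otimes f_{64}$ pairs are irreducible. Bookkeeping the multiplicities of $V_{h_{16},\ell}$, $V_{h_{32},\ell}$, $\chi\otimes V_{h_{32},\ell}$ and the various Tate/Dirichlet pieces then yields the full $H^2_{\text{\'et}}(X(8)\times X(8))\otimes\bQ_\ell$.

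The next step is to cut down to $H^2_{\text{\'et}}(\bar S)\otimes\bQ_\ell$ using the fact from Proposition \ref{semisimple} that $s^\ast$ realizes it as the $\Delta G$-invariant submodule. Since $G\simeq(\Z/2\Z)^3$ acts by sign changes on $u,v,w$ (equivalently on the basis forms $f_{32},f^{(2)}_{32},f_{64}$ and on the pair $g_{64,\pm}$), I would determine the $G$-character of each one-dimensional Hecke-eigenspace inside $H^1$ and keep exactly those tensor summands on which the diagonal $\Delta G$ acts trivially. Concretely, a summand $V_a\otimes V_b$ survives precisely when the two factors carry the same $G$-character, which selects the diagonal pairs and certain sign-matched cross terms; this is what produces the multiplicities $3,1,3$ on $V_{h_{16},\ell}$, $V_{h_{32},\ell}$, $(\chi\otimes V_{h_{32},\ell})$ and the rank-16 Tate part $L_\ell(-1)$. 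The identity $\chi\otimes V_{h_{32},\ell}\simeq V_{h_8,\ell}$ is then checked by comparing Fourier coefficients, exactly as in the proof of Proposition \ref{tensor}.

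For the first displayed isomorphism, I would compare $H^2_{\text{\'et}}(S)$ with $H^2_{\text{\'et}}(\bar S)$ through the resolution $\pi:S\to\bar S$. The $\ell$-adic analogue of the Leray argument in Lemma \ref{bS} gives a short exact sequence whose kernel is $\bigoplus_{p\in Z(\bQ)}\Q_\ell(-1)$, the classes of the $48$ exceptional divisors. Galois permutes these $48$ points as described in Section \ref{SS}: $24$ are defined over $\Q$, contributing $\Q_\ell(-1)^{\oplus 24}$, and the remaining $24$ are strictly defined over $\Q(\sqrt{-1})$, contributing the twisted part $\Q_\ell(\chi_{\Q(\sqrt{-1})}(-1))^{\oplus 24}$. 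The lower-degree and $H^0$ statements follow from $\bar S$ being geometrically connected with $H^1=0$ (already seen rationally in Lemma \ref{bS}) together with the comparison theorem. I expect the main obstacle to be the second step: correctly computing the $G$-action on each Hecke eigenform and hence pinning down which tensor summands are $\Delta G$-invariant, since this requires translating the geometric sign-change action on the coordinates $u,v,w$ of the model (\ref{bSeq}) into characters on the newform eigenspaces and tracking them through the sixteen-term tensor expansion without arithmetic slips.
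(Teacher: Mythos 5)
Your route is genuinely different from the paper's, and it breaks down exactly at the step you flag as the main obstacle. The paper never computes the $\Delta G$-invariants group-theoretically: it uses Proposition \ref{tensor} together with \cite[Theorem 2]{ST} only to constrain the \emph{list} of possible irreducible constituents of $H^2_{\text{\'et}}(\bar{S})\otimes_{\Q_\ell}\bQ_\ell$ (in particular, to rule out the irreducible four-dimensional tensors of case (4)), writes the decomposition with unknown multiplicities $a_1,\ldots,a_7$, and then determines these numerically via the Lefschetz trace formula, comparing $\sharp\bar{S}(\F_p)=p^2+1+{\rm tr}({\rm Frob}_p\,|\,H^2_{\text{\'et}}(\bar{S}))$ against Magma point counts for $3\le p\le 97$ (good reduction away from $2$ being supplied by trivial inertia on $H^2_{\text{\'et}}(X(8)\times X(8))$ and proper smooth base change). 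Your replacement --- ``a summand $V_a\otimes V_b$ survives precisely when the two factors carry the same $G$-character'' --- does not produce the stated multiplicities, and this is checkable: since $G$ acts by sign changes on $u,v,w$ only, the block $W_0:=V_{g_{64,+},\ell}\oplus V_{g_{64,-},\ell}$ carries the trivial $G$-character, so your matching rule keeps $W_0\otimes W_0$ in full; using $V_{g_{64,-},\ell}\simeq V_{g_{64,+},\ell}\otimes\chi$ and $\chi^2=1$, this block is $\bigl(V_{g_{64,+},\ell}^{\otimes 2}\bigr)^{\oplus 2}\oplus\bigl(\chi\otimes V_{g_{64,+},\ell}^{\otimes 2}\bigr)^{\oplus 2}$, which is symmetric under $g_{64,+}\leftrightarrow g_{64,-}$ and hence (by Proposition \ref{tensor}(2),(3)) yields $V_{h_{32},\ell}$ and $\chi\otimes V_{h_{32},\ell}$ with \emph{equal} multiplicity $2,2$ --- not the asymmetric $1,3$ of the theorem --- and likewise a Tate part $\Q_\ell(-1)^{\oplus 12}\oplus(\chi_{\Q(\sqrt{2})}\otimes\Q_\ell(-1))^{\oplus 4}$ rather than $L_\ell(-1)$. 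So either the literal diagonal with your character assignment is not the correct description of $\Delta G$ under the identification $X\simeq X(8)$ (e.g.\ a twisted graph), or extra input is needed; in either case the asserted output $3,1,3$ and the rank-$16$ piece $L_\ell(-1)$ do not follow from the steps you give, and nothing in your plan plays the role of the paper's numerical determination of the $a_i$. (Smaller slips in the same step: with $H^1_{\text{\'et}}(X(8))\otimes\bQ_\ell\simeq V_{f_{32},\ell}^{\oplus 2}\oplus V_{f_{64},\ell}\oplus V_{g_{64,+},\ell}\oplus V_{g_{64,-},\ell}$ there are $25$ tensor pairs counted with multiplicity, not sixteen; and Proposition \ref{semisimple} gives only injectivity of $s^\ast$ and semisimplicity, not the identification of its image with the $\Delta G$-invariants, which you must invoke separately.)

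Your derivation of the first displayed formula is also flawed. The $24$ singular points strictly defined over $\Q(\sqrt{-1})$ are permuted by $G_\Q$ in $12$ conjugate pairs, and a size-two orbit contributes ${\rm Ind}_{\Q(\sqrt{-1})}^{\Q}\mathbf{1}\otimes\Q_\ell(-1)\simeq\Q_\ell(-1)\oplus\bigl(\chi_{\Q(\sqrt{-1})}\otimes\Q_\ell(-1)\bigr)$, \emph{not} $\bigl(\chi_{\Q(\sqrt{-1})}\otimes\Q_\ell(-1)\bigr)^{\oplus 2}$; so on your own premises the permutation module on the $48$ exceptional classes is $\Q_\ell(-1)^{\oplus 36}\oplus\bigl(\chi_{\Q(\sqrt{-1})}\otimes\Q_\ell(-1)\bigr)^{\oplus 12}$, and you reach the theorem's $24/24$ split only through the incorrect rule ``one twisted line per non-rational point.'' If you pursue your route you must actually compute how Frobenius permutes the exceptional classes (the paper's own treatment of this part is terse --- it only records that $H^2_{\text{\'et}}(\bar{S})$ is a quotient of $H^2_{\text{\'et}}(S)$ by a sum of twisted Tate modules --- which makes a correct independent verification here all the more valuable, but it must be a real orbit computation, not a per-point character count).
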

\begin{proof}Since the first cohomology is invariant under the blowing up, 
we have $H^1_{\text{\'et}}(S)=H^1_{\text{\'et}}(\bar{S})=0$ by the Hodge diamond of $S$ and 
the comparison theorem. The claim for $H^i_{\text{\'et}}$ with $i=0,2$ 
is obvious.  

Let $p$ be any prime such that $p\nmid 2\ell$. 
Note that the inertia group at $p$ acts trivially on $H^2_{\text{\'et}}(X(8)\times X(8))$ 
and so is $H^2_{\text{\'et}}(\bar{S})$. By Leray spectral sequence for $\pi$, we see that $H^2_{\text{\'et}}(\bar{S})$ is a 
quotient of $H^2_{\text{\'et}}(S)$ by the direct sum of twisted 
Tate modules.  
Thus, we have 
$H^2_{\text{\'et}}(\bar{S})\simeq H^2_{\text{\'et}}(\bar{S}_{\bF_p})$ by the proper smooth 
base change for $H^2_{\text{\'et}}(S)$. 
By Lefschetz trace formula (\cite[Th\'eor\`eme 3.1.]{Deligne}), we have 
\begin{eqnarray}\label{Lef}
\sharp\bar{S}(\F_p)&=&p^2+1+{\rm tr}({\rm Frob}_p|H^2_{\text{\'et}}(\bar{S}_{\bF_p})) 
\nonumber\\
&=&p^2+1+{\rm tr}({\rm Frob}_p|H^2_{\text{\'et}}(\bar{S}))=
p^2+1+{\rm tr}({\rm Frob}_p|H^2_{\text{\'et}}(\bar{S})_{\Q_\ell}\bQ_\ell).
\end{eqnarray}
By \cite[Theorem 2]{ST} and the comparison theorem, 
the image of ${\rm Pic}(S_{\bQ})_\Q$ under the $\ell$-adic cycle map 
exhausts the transcendental part of $H^2_{\text{\'et}}(S)$. 
Thus, by Proposition \ref{tensor} (in particular, the fourth case can not occur), 
$H^2_{\text{\'et}}(\bar{S})\otimes_{\Q_\ell}\bQ_\ell$ can be written as 
$$V_{h_{16},\ell}^{\oplus a_1}\oplus 
 V_{h_{32},\ell}^{a_2}\oplus (\chi\otimes 
 V_{h_{32},\ell})^{a_3}\oplus (L_\ell(-1)\otimes_{\Q_\ell}\bQ_\ell)$$
 where 
$$L_\ell:=\Q_\ell^{\oplus a_4}\oplus \Q_\ell(\chi_{\Q(\sqrt{-1})})^{\oplus a_5}
  \oplus \Q_\ell(\chi_{\Q(\sqrt{-2})})^{\oplus a_6}
  \oplus \Q_\ell(\chi_{\Q(\sqrt{2})})^{\oplus a_7}$$ 
  for some non-negative integers $a_1,\ldots,a_7$. 
By computing the both sides of (\ref{Lef}) for $3\le p\le 97$ (we may choose $\ell$ to be greater than $97$), 
we can determine $a_1,\ldots,a_7$ and thus we have the claim. 
Note that the computation is done by Magma. 
\end{proof}
As an immediate consequence of Theorem \ref{maineta} with \cite[Theorem 2]{ST}, we have the followings result.
\begin{cor}\label{PN}The Picard group 
${\rm Pic}(S_{\bQ})$ is free of rank $64$ and it is generated by 
\begin{enumerate}
\item 34 irreducible divisors defined over $\Q$;
\item 26 irreducible divisors strictly defined over $\Q(\sqrt{-1})$;
\item one irreducible divisor strictly defined over $\Q(\sqrt{-2})$;
\item 3 irreducible divisors strictly defined over $\Q(\sqrt{2})$.
\end{enumerate}
\end{cor}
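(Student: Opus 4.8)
The plan is to transport the $G_\Q$-module structure of $H^2_{\text{\'et}}(S)$ obtained in Theorem \ref{maineta} to the Picard group through the cycle class map, and then to match the resulting isotypic components with the explicit generators of Stoll and Testa. First I would use the $\ell$-adic cycle class map
$${\rm cl}_S:{\rm Pic}(S_{\bQ})\otimes_\Z\Q_\ell\lra H^2_{\text{\'et}}(S)(1),$$
which is injective and $G_\Q$-equivariant, with image the span of the algebraic classes. By \cite[Theorem 2]{ST} the source is free of rank $64$, while by Theorem \ref{maineta} the algebraic part of $H^2_{\text{\'et}}(S)\otimes_{\Q_\ell}\bQ_\ell$ is exactly the $(-1)$-Tate twist of
$$\bQ_\ell^{\oplus 34}\oplus\bQ_\ell(\chi_{\Q(\sqrt{-1})})^{\oplus 26}\oplus\bQ_\ell(\chi_{\Q(\sqrt{-2})})\oplus\bQ_\ell(\chi_{\Q(\sqrt{2})})^{\oplus 3},$$
which again has dimension $64$. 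Hence ${\rm cl}_S$ becomes an isomorphism after tensoring with $\bQ_\ell$, and cancelling the Tate twist identifies ${\rm Pic}(S_{\bQ})\otimes_\Z\bQ_\ell$ with this representation of $G_\Q$.

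This representation factors through ${\rm Gal}(\Q(\zeta_8)/\Q)\simeq(\Z/2\Z)^2$, whose nontrivial characters are precisely $\chi_{\Q(\sqrt{-1})}$, $\chi_{\Q(\sqrt{2})}$, and $\chi_{\Q(\sqrt{-2})}=\chi_{\Q(\sqrt{-1})}\chi_{\Q(\sqrt{2})}$. I would then interpret the isotypic multiplicities as fields of definition: a class fixed by $G_\Q$ is defined over $\Q$, while a class on which $G_\Q$ acts through a single quadratic character $\chi_{\Q(\sqrt{d})}$ is strictly defined over $\Q(\sqrt{d})$. The multiplicities $34$, $26$, $1$, $3$ read off above then yield exactly the asserted number of generators over $\Q$, $\Q(\sqrt{-1})$, $\Q(\sqrt{-2})$, and $\Q(\sqrt{2})$ respectively.

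To upgrade this to a statement about \emph{irreducible} divisors, I would invoke the explicit generators of \cite[Theorem 2]{ST}. These consist of the $48$ exceptional curves of the resolution $\pi:S\to\bar S$---defined over $\Q$ or over $\Q(\sqrt{-1})$ according to the field of the corresponding singular point tabulated in Section \ref{SS}---together with further curves arising from the quotient presentation $\bar S\simeq X(8)\times X(8)/\Delta G$, in which the equation $u^2=2xy$ of the model $X$ accounts for the appearance of $\Q(\sqrt{2})$ and $\Q(\sqrt{-2})$. The essential geometric input is that every such generator is defined over $\Q$ or over one of these three quadratic fields; granting this, no $G_\Q$-orbit of length four occurs, and the Frobenius traces already computed for $3\le p\le 97$ in the proof of Theorem \ref{maineta} determine which quadratic character, hence which field, is attached to each generator.

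The main obstacle I anticipate is exactly this last matching step. The $\bQ_\ell$-representation records only the isotypic multiplicities and does not by itself distinguish, say, a single $G_\Q$-orbit of length four from a union of shorter orbits carrying the same characters. Resolving this genuinely requires the explicit geometry of the Stoll--Testa generators---each defined over a quadratic subfield of $\Q(\zeta_8)$---to rule out longer orbits, after which the representation-theoretic count of Theorem \ref{maineta} translates verbatim into the fields of definition asserted in the statement.
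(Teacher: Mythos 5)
Your proposal is correct and follows essentially the same route as the paper, which derives the corollary directly from the isotypic multiplicities $(34,26,1,3)$ in Theorem \ref{maineta} combined with the explicit generators of \cite[Theorem 2]{ST}; the paper states this as an immediate consequence without spelling out the details. In fact you are more careful than the paper: you correctly flag that the $\bQ_\ell[G_\Q]$-module structure alone cannot exclude longer Galois orbits (e.g.\ a length-four orbit contributing one copy of each character of ${\rm Gal}(\Q(\zeta_8)/\Q)$), and that one must use the explicit Stoll--Testa curves, each defined over a subfield of $\Q(\zeta_8)$, to convert the multiplicities into the asserted fields of definition.
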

Finally, we state the result for the $L$-functions of $\bar{S}$ and $S$ which also follows from 
Theorem \ref{maineta}.
\begin{cor}\label{LSS}It holds that 
$$L(s,H^2_{\text{\'et}}(S))=
L(s,H^2_{\text{\'et}}(\bar{S}))\zeta(s-1)^{24}L(s-1,\chi_{\Q(\sqrt{-1})})^{24}$$
and 
$$
L(s,H^2_{\text{\'et}}(\bar{S}))=L(s,h_{16})^3 L(s,h_{32})L(s,h_8)^3L(s-1,L_\ell)$$
where 
$L(s,L_\ell)=\zeta(s)^{10}
L(s,\chi_{\Q(\sqrt{-1})})^{2}L(s,\chi_{\Q(\sqrt{-2})})L(s,\chi_{\Q(\sqrt{2})})^{3}$. 
\end{cor}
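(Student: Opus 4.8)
The plan is to obtain both identities directly from the isomorphisms of Theorem~\ref{maineta}, invoking only two formal properties of the $L$-function $L(s,-)$ of \cite{Serre}: multiplicativity on direct sums, $L(s,V\oplus W)=L(s,V)L(s,W)$, and the Tate-twist shift $L(s,V(-1))=L(s-1,V)$. The first preliminary point is that $L(s,-)$ is unchanged under the scalar extension $\Q_\ell\hookrightarrow\bQl$: for $p\ne\ell$ the local factor is the reciprocal of $\det(1-p^{-s}{\rm Frob}_p\mid V^{I_p})$, a polynomial over $\Q_\ell$ that is unaffected by $\otimes_{\Q_\ell}\bQl$, and the factor at $\ell$ is likewise determined by the $G_{\Q_\ell}$-module structure. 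Hence $L(s,H^2_{\text{\'et}}(\bar{S}))=L(s,H^2_{\text{\'et}}(\bar{S})\otimes_{\Q_\ell}\bQl)$ and similarly for $S$, so I may compute with the $\bQl$-coefficient decompositions supplied by Theorem~\ref{maineta}.

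Next I record the $L$-function of each constituent. By local--global compatibility for the Galois representations of \cite{Ribet77}, one has $L(s,V_{h,\ell})=L(s,h)$ for a newform $h$; applied to $h_{16}$ and $h_{32}$, and---through the identification $\chi\otimes V_{h_{32},\ell}\simeq V_{h_8,\ell}$ of Theorem~\ref{maineta}---to $h_8$, this yields the factors $L(s,h_{16})^3$, $L(s,h_{32})$ and $L(s,h_8)^3$ from the first three summands of $H^2_{\text{\'et}}(\bar{S})\otimes_{\Q_\ell}\bQl$. For the rank-one pieces, the representation attached to a quadratic character $\chi_L$ satisfies $L(s,\Q_\ell(\chi_L))=L(s,\chi_L)$, with $L(s,\Q_\ell)=\zeta(s)$ as the trivial case; therefore $L(s,L_\ell)=\zeta(s)^{10}L(s,\chi_{\Q(\sqrt{-1})})^{2}L(s,\chi_{\Q(\sqrt{-2})})L(s,\chi_{\Q(\sqrt{2})})^{3}$, and the twist in $L_\ell(-1)$ contributes $L(s-1,L_\ell)$. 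Multiplying the four factors reproduces exactly the asserted formula for $L(s,H^2_{\text{\'et}}(\bar{S}))$.

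For the first identity I feed the isomorphism $H^2_{\text{\'et}}(S)\simeq H^2_{\text{\'et}}(\bar{S})\oplus\Q_\ell(-1)^{\oplus24}\oplus(\Q_\ell(\chi_{\Q(\sqrt{-1})})(-1))^{\oplus24}$ into the same machinery: the two additional blocks contribute $L(s,\Q_\ell(-1))^{24}=\zeta(s-1)^{24}$ and $L(s,\Q_\ell(\chi_{\Q(\sqrt{-1})})(-1))^{24}=L(s-1,\chi_{\Q(\sqrt{-1})})^{24}$, which is precisely the claimed relation between the two $L$-functions. Once Theorem~\ref{maineta} is granted the argument is bookkeeping; the step I would flag as the only genuine subtlety is the matching of Euler factors at the bad prime $p=2$ and at $p=\ell$. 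Here it is essential that Theorem~\ref{maineta} provides an honest isomorphism of $\bQl[G_\Q]$-modules rather than a mere coincidence of Frobenius traces at good primes: this forces $V^{I_p}$ and the ${\rm Frob}_p$-action to split compatibly at every prime, so the local factors multiply correctly at $2$ as well, while the correct behaviour at $\ell$ is already encoded in the local--global compatibility statements used above. Consequently no prime-by-prime verification beyond Theorem~\ref{maineta} is required.
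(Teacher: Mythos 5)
Your proposal is correct and follows exactly the route the paper intends: Corollary~\ref{LSS} is stated there as an immediate consequence of Theorem~\ref{maineta}, obtained by applying multiplicativity of $L(s,-)$ on direct sums, the Tate-twist shift, and the identification $L(s,V_{h,\ell})=L(s,h)$ to the module decompositions of that theorem. Your added care about invariance under $\otimes_{\Q_\ell}\bQl$ and the matching of local factors at $p=2$ and $p=\ell$ simply makes explicit what the paper leaves as bookkeeping.
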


\end{document}